\documentclass[11pt]{article}

\usepackage{amsfonts,amssymb}

\def\Sym{S}

%\newarrow{Dots}{}...{}

\def\Orb{{\mathcal O}}
 %% A MODIFIER !!!!!!!!!!!!!!

\makeatletter
\def\revddots{\mathinner{\mkern1mu\raise\p@\vbox{\kern7\p@\hbox{.}}\mkern2mu\raise4\p@\hbox{.}\mkern2mu\raise7\p@\hbox{.}\mkern1mu}}
\makeatother

\newcommand\eqref[1]{(\ref{#1})}

\newenvironment{proof}{{\noindent\bf Proof.}}{\hfill $\square$}

\def\longto{\longrightarrow}

\def\Part{{\mathcal P}}\def\lr{{\mathcal LR}}

\def\PP{{\mathbb P}}
\def\QQ{{\mathbb Q}}\def\ZZ{{\mathbb Z}}
\def\CC{{\mathbb C}}

\def\Fl{{\mathcal Fl}}\def\Face{{\mathcal F}}
\def\Gr{{\mathbb G}}

\def\LR{\rm LR}

\def\Li{{\mathcal{L}}}

\def\Om{\Omega}

\def\GL{{\rm GL}}

\def\hnu{{\hat\nu}}
\def\hG{{\hat G}} \def\hB{{\hat B}}\def\hT{{\hat T}}\def\hU{{\hat U}} 
\def\hW{{\hat W}}\def\hw{{\hat w}}
\def\hP{{\hat P}}\def\hbeta{{\hat \beta}}

\newtheorem{theo}{Theorem}
\newtheorem{coro}{Corollary}

\newenvironment{defin}{Definition}{}
\newenvironment{remark}{Remark}{}

\begin{document}
\title{Reductions for branching coefficients}
\author{N. Ressayre}
%\email{ressayre@math.univ-montp2.fr}
%\address{Institut de Mathématiques et de Modélisation de Montpellier\\
% Université Montpellier 2\\
% Case Courrier 051\\
% Place Eugène Bataillon\\
% 34095 MONTPELLIER Cedex\\
% France}
% \keywords{Horn problem, Eigencone, Belkale-Kumar product}
% \classification{20G05,20G20}

\maketitle
\begin{abstract}
Let $G$ be a connected reductive subgroup of a complex connected reductive group $\hat{G}$.
Fix maximal tori and Borel subgroups of $G$ and $\hat G$.
Consider the cone $\lr(G,\hG)$ generated by the pairs $(\nu,\hnu)$ of  dominant characters 
such that $V_\nu^*$  is a sub-$G$-module of $V_{\hnu}$.
It is known that $\lr(G,\hG)$ is a closed convex polyhedral cone.
In this work, we show that every regular face of $\lr(G,\hG)$ gives
rise to a {\it reduction rule} for multiplicities.
More precisely, for $(\nu,\hnu)$ on such a face, the
multiplicity of $V_\nu^*$  in $V_{\hnu}$ is proved to be equal to a similar
multiplicity for representations of Levi subgroups of $G$ and $\hat G$. 
This generalizes, by different methods, results obtained by Brion, Derksen-Weyman, Roth\dots 
\end{abstract}

\section{Introduction}

Let $G$ be a connected reductive subgroup of a complex connected reductive group $\hat{G}$.
The branching problem consists in 
\begin{center}
  decomposing irreducible representations of $\hat G$ as sum of
  irreducible $G$-modules.
\end{center}

Fix maximal tori $T\subset\hT$ and Borel subgroups $B\supset T$ and
$\hB\supset \hT$ of $G$ and $\hG$.
Let $X(T)$ denote the group of characters of $T$ and let $X(T)^+$
denote the set of dominant characters.
For $\nu\in X(T)^+$, $V_\nu$ denotes the
irreducible representation of highest weight $\nu$.
Similarly we use notation $X(\hat T)$,  $X(\hat T)^+$,  $V_{\hat\nu}$
relatively to $\hat G$. 
For any $G$-module $V$, the subspace of
 $G$-fixed vectors is denoted by $V^G$.
For $\nu\in X(T)^+$ and $\hat\nu\in X(\hat T)^+$, set
\begin{eqnarray}
  \label{eq:defc}
  c_{\nu\,\hat\nu}(G,\hat G)=\dim(V_\nu\otimes V_{\hat\nu})^G.
\end{eqnarray}
Sometimes we simply write $c_{\nu\,\hat\nu}$ for $c_{\nu\,\hat\nu}(G,\hat G)$.
Let $V_\nu^*$ denote the dual representation of $V_\nu$.
The branching problem is equivalent to the knowledge of these coefficients
since 
\begin{eqnarray}
  \label{eq:2}
  V_{\hat\nu}=\sum_{\nu\in X(T)^+}c_{\nu\,\hat\nu} V_\nu^*.
\end{eqnarray}
The set $\LR(G,\hG)$ of pairs $(\nu,\hnu)\in X(T)^+\times X(\hat T)^+$
such that $c_{\nu\,\hat\nu}\neq 0$ is known to be 
%(equivalently, $V_\nu^*$ is a sub-$G$-module of $V_{\hnu}$)  
is a finitely generated subsemigroup of the free abelian group
$X(T)\times X(\hat T)$ (see \cite{Elash}).
Consider the convex cone $\lr(G,\hG)$ generated in $(X(T)\times X(\hat T))\otimes\QQ$
by $\LR(G,\hG)$. It is a closed convex polyhedral cone in $(X(T)\times X(\hat T))\otimes\QQ$.

\bigskip
Let  $\Face$ be a face of $\lr(G,\hG)$. Assume that $\Face$ is
regular, that is that it contains pairs  $(\nu,\hnu)$ of
regular dominant weights. 
Let $\hW$ be the Weyl group of $\hG$ and $\hT$.
If $S$ is a torus in $G$ and $H$ is a subgroup of $G$ containing $S$,
$H^S$ denotes the centralizer of $S$ in $H$.
By \cite{GITEigen2}, the regular face $\Face$ corresponds to a pair
$(S,\hw)$ where $S$ is a subtorus of $T$ and $\hw\in\hat W$ such that
\begin{eqnarray}
  \label{eq:4}
  \hG^S\cap \hw \hB\hw^{-1}=\hB^S,
\end{eqnarray}
and the span of $\Face$ is the set of pairs $(\nu,\hnu)\in (X(T)\times
X(\hat T))\otimes\QQ$ such that 
\begin{eqnarray}
  \label{eq:5}
  \nu_{|S}+\hw\hnu_{|S}=0\in X(S)\otimes \QQ.
\end{eqnarray}

\begin{theo}
\label{th:mainintro}
Let $(\nu,\hnu)\in X(T)^+\times X(\hT)^+$ be a pair of dominant
weights.
Assume that $(\nu,\hnu)$ belongs to the span of $\Face$
(equivalently that it satisfies condition \eqref{eq:5}).
Then 
$$
c_{\nu\,\hat\nu}(G,\hat G)=c_{\nu\,\hw\hat\nu}(G^S,\hat G^S).
$$ 
\end{theo}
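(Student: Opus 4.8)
The plan is to realize both sides of the claimed identity as dimensions of spaces of invariant sections of line bundles on products of flag varieties, and then to identify these two spaces directly by a geometric-invariant-theory reduction attached to the one-parameter subgroup generating $S$. By Borel--Weil, the left-hand side is
$$c_{\nu\,\hnu}(G,\hG)=\dim \H^0\!\left(X,\mathcal{L}\right)^G,\qquad X:=G/B\times \hG/\hB,$$
where $G$ acts diagonally (through $G\hookrightarrow\hG$ on the second factor) and $\mathcal{L}=\mathcal{L}_\nu\boxtimes\mathcal{L}_{\hnu}$ is the $G$-linearized line bundle whose global sections carry $V_\nu\otimes V_{\hnu}$. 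Applying the same formula to $G^S\subset\hG^S$ realizes the right-hand side as $\dim\H^0(Z,\mathcal{L}|_Z)^{G^S}$ for $Z:=G^S/B^S\times\hG^S/\hB^S$, with $B^S=B\cap G^S$ and $\hB^S=\hB\cap\hG^S$. Thus the theorem reduces to an isomorphism $\H^0(X,\mathcal{L})^G\cong\H^0(Z,\mathcal{L}|_Z)^{G^S}$, which in particular yields the \emph{exact} multiplicity, not merely an asymptotic equality.

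First I would exhibit $Z$ inside $X$ as a distinguished fixed component. Choose an indivisible dominant $\lambda\in X_*(S)$ with centralisers $G^\lambda=G^S$ and $\hG^\lambda=\hG^S$. Acting by $\lambda$ on $\hG/\hB$, the fixed components are the $\hG^S$-orbits of the points $\hw'\hB/\hB$; the component through $\hw\hB/\hB$ is $\hG^S/(\hG^S\cap\hw\hB\hw^{-1})$, which by the defining condition \eqref{eq:4} equals $\hG^S/\hB^S$, the full flag variety of $\hG^S$. A fibre computation shows $\mathcal{L}_{\hnu}$ restricts there to the line bundle of weight $\hw\hnu$ --- this is precisely the source of the twist by $\hw$ on the right-hand side. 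The analogous untwisted statement for the component of $(G/B)^\lambda$ through $B/B$ gives the factor $G^S/B^S$ with weight $\nu$. Hence $Z$ is exactly the advertised product and $\mathcal{L}|_Z=\mathcal{L}_\nu\boxtimes\mathcal{L}_{\hw\hnu}$.

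Next I would build the restriction map $\rho\colon\H^0(X,\mathcal{L})^G\to\H^0(Z,\mathcal{L}|_Z)^{G^S}$ and prove it is bijective, which is where the face condition \eqref{eq:5} is decisive. Condition \eqref{eq:5} says exactly that the Hilbert--Mumford weight $\mu^{\mathcal{L}}(\cdot,\lambda)$ vanishes along the relevant locus, so $\mathcal{L}$ has relative weight $0$ in the directions contracted by $\lambda$. Let $P=P(\lambda)\subset G$ be the associated parabolic, with unipotent radical $U$ and Levi $G^S$, and let $C\subset X$ be the Bialynicki--Birula attracting set retracting onto $Z$; then $C$ is $P$-stable and $\eta\colon G\times_P C\to X$ is the geometric incarnation of the face. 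For injectivity: a $G$-invariant section vanishing on $Z$ vanishes at every $x\in C$, since the weight-$0$ limit identifies $s(x)$ with $s(\lim_{t\to0}\lambda(t)\cdot x)=0$; hence $s$ vanishes on the dense set $G\cdot C$, so $s=0$. For surjectivity I would extend $\sigma\in\H^0(Z,\mathcal{L}|_Z)^{G^S}$ first to a $U$-invariant section on $C$, constant along the contracted directions --- possible exactly because the relative weight is $0$ --- then $P$-equivariantly to $G\times_P C$, and finally push it forward through $\eta$.

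The main obstacle is this surjectivity step: producing a global $G$-invariant section on all of $X$ from its restriction to $Z$. Two points must be controlled. The extension across the unipotent directions must land in the weight-$0$ (invariant) part, which requires checking that the $\lambda$-weights on the normal bundle of $Z$ in the repelling directions are strictly positive, so that no obstruction appears; this is guaranteed by the \emph{well-covering} property of $(C,\lambda)$ coming from \eqref{eq:4} and the regularity of $\Face$. Moreover, the section, defined a priori only on the dense image of $\eta$, must extend regularly over its complement; here I would invoke well-coveredness from \cite{GITEigen2} to show that $\eta$ is birational onto a subvariety whose complement has codimension at least two in the normal variety $X$, so that the section extends by normality. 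Equivalently, one may phrase the whole reduction as a Luna slice argument at a polystable point of $X^{\rm ss}(\mathcal{L})$ whose $G$-orbit is closed and meets $Z$, identifying the GIT quotients $X\quot G$ and $Z\quot G^S$ and hence the invariant sections on both sides.
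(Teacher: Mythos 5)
Your overall strategy is the same as the paper's: Borel--Weil on both sides, identification of $Z=C$ as the fixed component attached to $(S,\hw)$ with $\Li_{|C}=\Li_\nu\boxtimes\Li_{\hw\hnu}$ via \eqref{eq:4}, and reduction to the statement that restriction induces an isomorphism $\H^0(X,\Li)^G\simeq\H^0(C,\Li_{|C})^{G^\lambda}$ for the well-covering pair $(C,\lambda)$ with $\lambda$ acting trivially on $\Li_{|C}$ (this is exactly Theorem~\ref{th:geom}). The injectivity argument and the passage $\H^0(C^+,\Li_{|C^+})^{P(\lambda)}\simeq\H^0(C,\Li_{|C})^{G^\lambda}$ are fine and correspond to the chain of isomorphisms \eqref{eq:CP}, \eqref{eq:CL} in the paper.

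The genuine gap is in your surjectivity step, and it is precisely the point where the paper does all of its work. You assert that $\eta$ is birational onto a subvariety whose complement has codimension at least two in $X$, so that the section extends by normality of $X$. Well-covering gives no such codimension bound: the boundary $\overline{C^+}\setminus C^+$ has codimension-one components (closures of $P(\lambda)s_\beta P/P\times\hP(\lambda)\hP/\hP$ and the analogous $\hbeta$-divisors), and their $G$-saturations can perfectly well be divisors in $X$, so the extension problem across divisors cannot be wished away. The paper handles this by replacing $X$ with $G\times_{P(\lambda)}\overline{C^+}$ (proper and birational over $X$, whence \eqref{eq:CP}), observing that $\overline{C^+}$ is a product of Schubert varieties and therefore \emph{normal}, and then explicitly bounding the order of the pole of a $P(\lambda)$-invariant section along each boundary divisor $D$: in local $T$-equivariant coordinates the exponent of the transverse coordinate is $j=\tfrac{-1}{a}\bigl(\sum_i j_ia_i-\mu\bigr)$, and the sign constraints $a_i\ge 0$ (the $U_y$, $\hU_y$ directions are roots of $P(\lambda)$, $\hP(\lambda)$), $a<0$ (the curve direction is not) and $\mu\le 0$ (semiampleness of $\Li_{|\delta}$ together with the trivial $\lambda$-action on $\Li_{x_0}$) force $j\ge 0$. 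None of this appears in your proposal, and without it the extension step does not go through. Your fallback via Luna slices and the identification of GIT quotients $X\quot G\simeq C\quot G^\lambda$ also cannot substitute: an isomorphism of quotients controls $\H^0(X,\Li^{\otimes n})^G$ only for large $n$, whereas the theorem asserts an equality of the individual multiplicities.
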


Theorem \ref{th:mainintro} is the algebraic conterpart of the 
 geometric Theorem~\ref{th:geom} below.
Let $X=G/P\times \hG/\hP$ be a flag manifold for the group $G\times \hG$.
Let $\lambda$ be a one-parameter subgroup of $G$ and $C$ be an
irreducible component of the fixed point set $X^\lambda$ of $\lambda$
in $X$. 
Let $G^\lambda$ be the centralizer of the image of $\lambda$ in $G$. 
We assume that $(C,\lambda)$ is a {\it (well) covering pair} in the
sense of \cite[Definition 3.2.2]{GITEigen} (see also Definition \ref{def:wc}
below).

\begin{theo}\label{th:geom}
Let $\Li$ be a $G$-linearized line bundle on $X$ generated by its
global sections such that $\lambda$
acts trivially on the restriction $\Li_{|C}$.
  Then the restriction map induces an isomorphism 
$$
H^0(X,\Li)^G\longto H^0(C,\Li_{|C})^{G^\lambda},
$$
between the spaces of invariant sections of $\Li$ and $\Li_{|C}$.
\end{theo}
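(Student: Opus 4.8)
The plan is to prove Theorem~\ref{th:geom} by analyzing the restriction map via Geometric Invariant Theory, exploiting the hypothesis that $(C,\lambda)$ is a well-covering pair. The core idea is that the covering property furnishes a $G$-stable open subset of $X$ that retracts, in a GIT sense, onto a neighborhood of $C$, so that global invariant sections are controlled entirely by their behavior along $C$.

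First I would recall the structure associated to the one-parameter subgroup $\lambda$. Following the Bialynicki-Birula / Kempf theory, one has the attracting set
$$
C^+=\{x\in X\ :\ \lim_{t\to 0}\lambda(t)x\in C\},
$$
together with the projection $p\colon C^+\to C$ sending $x$ to that limit. Because $\lambda$ acts trivially on $\Li_{|C}$, the action of $\lambda$ linearizes the fibers of $\Li$ over $C^+$ with a single weight, and pulling back sections along $p$ identifies $H^0(C,\Li_{|C})$ with the subspace of $H^0(C^+,\Li)$ on which $\lambda$ acts with that weight. The parabolic $P(\lambda)=\{g\in G : \lim_{t\to0}\lambda(t)g\lambda(t)^{-1} \text{ exists}\}$ acts on $C^+$, and its Levi quotient $G^\lambda$ acts on $C$; I would use this to match $G^\lambda$-invariants on $C$ with $P(\lambda)$-invariants, hence $G$-invariants, on $C^+$.

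The heart of the argument is the well-covering hypothesis. By \cite[Definition 3.2.2]{GITEigen}, $(C,\lambda)$ well-covering means that the natural map $G\times_{P(\lambda)} C^+_{\rm ss}\to X$ is an open immersion onto a $G$-stable open subset $U$ whose image meets the semistable locus, where $C^+_{\rm ss}$ is the appropriate semistable part of the attracting set. The plan is to show the restriction map factors as
$$
H^0(X,\Li)^G\longto H^0(U,\Li)^G\stackrel{\sim}{\longto}H^0(C,\Li_{|C})^{G^\lambda},
$$
where the second map is the isomorphism coming from the Bialynicki-Birula fibration over $C$ combined with the open immersion from $G\times_{P(\lambda)}C^+$. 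Injectivity of the first map follows because a section of a globally generated line bundle vanishing on a $G$-stable dense-in-the-semistable-sense open set must vanish: the complement of $U$ has no semistable points, so any invariant section is determined by its restriction to $U$. For surjectivity I would take an invariant section on $C$, pull it back to $C^+$ via $p$, extend equivariantly over $U\cong G\times_{P(\lambda)}C^+_{\rm ss}$, and then argue that this section extends to all of $X$ because $\Li$ is generated by its global sections and the boundary $X\setminus U$ is unstable, so any section defined on the semistable locus extends.

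The main obstacle I expect is the surjectivity, specifically showing that a section defined on the covering open set $U$ genuinely extends to a \emph{global} invariant section of $\Li$ on $X$, rather than merely a section on the semistable locus. This is where the \emph{well}-covering condition (as opposed to merely covering) must be used in an essential way: it guarantees not only that $U$ surjects onto the GIT quotient but that the complement is contained in the unstable locus $X\setminus X^{\rm ss}(\Li)$, so that pushing the section across the boundary does not introduce poles. I would handle this by passing to the GIT quotient $X\quot G$, noting that invariant sections descend to sections of a line bundle on the quotient, that $U\quot G$ maps isomorphically (or at least birationally and surjectively) onto an open subset of $X\quot G$ under the well-covering hypothesis, and that global sections on the quotient are then recovered from those on this open subset. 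The delicate point throughout is keeping track of the $\lambda$-weights so that the identification with $G^\lambda$-invariants on $C$ is exact, and verifying that the triviality of $\lambda$ on $\Li_{|C}$ makes all relevant weight spaces line up with the invariant sections rather than with a shifted weight.
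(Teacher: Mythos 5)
Your skeleton --- pass from $G$-invariants on $X$ to $P(\lambda)$-invariants on the attracting set $C^+$ via the covering map $G\times_{P(\lambda)}C^+\to X$, then to $G^\lambda$-invariants on $C$ via the Bialynicki-Birula retraction --- agrees with the paper's first moves, and the injectivity half is fine (the image of $\eta$ is dense because the pair is covering). But the decisive step, surjectivity, is not actually proved in your proposal. What has to be shown is that a regular $P(\lambda)$-invariant section of $\Li$ on $C^+$ extends across the boundary $\overline{C^+}\setminus C^+$, which can contain divisors. Your argument for this is that the complement of the covering open set is unstable, ``so any section defined on the semistable locus extends.'' This is circular and, as stated, false: the assertion that invariant sections vanish on the unstable locus applies to \emph{global} sections, so it cannot be invoked to extend a section that is so far only defined on an open set --- that is exactly what is to be proved. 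Moreover the theorem assumes only that $\Li$ is generated by its global sections, not that it is ample, and does not assume $C$ meets $X^{\rm ss}(\Li)$; in the intended application $(\nu,\hnu)$ may lie in the span of the face but outside the cone, in which case $H^0(X,\Li)^G=0$ and the whole GIT-quotient framework you lean on gives no purchase. The well-covering hypothesis in this paper is a statement about $\eta$ being an isomorphism over some open set, independent of any semistable locus, and it does not by itself prevent poles along the boundary divisors.

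The paper closes precisely this gap by a different mechanism, which it flags as the novelty of the proof: $\overline{C^+}=\overline{P(\lambda)P/P}\times\overline{\hP(\lambda)\hP/\hP}$ is a product of Schubert varieties, hence \emph{normal}, so it suffices to show the section has no pole along each codimension-one component $D$ of $\overline{C^+}\setminus C^+$. This is done by an explicit $T$-equivariant coordinate computation in a neighborhood of a point of $D$: writing the pulled-back section as a Laurent polynomial in root coordinates $\xi_i$ (with $\lambda$-weights of one sign, since they come from $P(\lambda)$ and $\hP(\lambda)$) and a coordinate $\zeta$ on the boundary direction (whose weight has the opposite sign), $\lambda$-invariance forces the exponent of $\zeta$ to be nonnegative provided the linearization weight $\mu$ on the fiber over $D$ satisfies $\mu\le 0$; and this last sign is where ``generated by global sections'' is genuinely used, via the degree of $\Li$ on the $T$-stable curve $\delta$ joining $x_0\in C$ to $D$ together with the triviality of the $\lambda$-action on $\Li_{x_0}$. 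None of this is present in, or implied by, your proposal, so as written the proof has a genuine gap at its central step.
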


Several particular cases of Theorems \ref{th:mainintro} and
\ref{th:geom} was known before. 
If $G=T$ is a maximal torus of $G=\GL_n$, our theorem is equivalent to 
\cite[Theorem 5.8]{KTT:factorKostka}.
If $\hG=G\times G$ (or more generally
$\hG=G^s$ for some integer $s\geq 2$) and $G$ is diagonally
embedded in  $\hG$ then $c_{\nu\,\hnu}(G,\hG)$
(resp. $c_{\nu\,\hw\hat\nu}(G^S,\hat G^S)$)
are tensor product multiplicities for the group $G$ (resp. $G^S$).
This case was recently proved independently by Derksen and Weyman in \cite[Theorem 7.4]{DW:comb}
and King, Tollu and Toumazet in \cite[Theorem 1.4]{KTT:factorLR}
if $G=GL_n$ and for any reductive group by Roth in \cite{Roth:red}. 
If $\nu$ is regular then Theorem \ref{th:geom} can be obtained
applying \cite[Theorem 3]{Br:genface} and \cite{GITEigen}.
Similar reductions can be found in \cite{Br:Foulkes,man,Montagard:plethysm}.  

Note that our proof is new and uses strongly the normality of the
Schubert varieties. For example, in Roth's proof (which may be the
closest from our) the normality of Schubert varieties play no role.
In \cite{DW:comb}, the case $GL_n\subset\GL_n\times \GL_n$ is obtained
as a consequence of a more general result on quivers. 
Derksen-Weyman's theorem on quivers can be proved by the method used here.

In Section \ref{sec:exple}, Theorem \ref{th:geom} is applied to recover
known results in representation theory.

\bigskip
{\bf Acknowledgment.}
This work was motivated by  Roth's paper \cite{Roth:red}. I want to
thank Mike Roth for stimulating discussions on it.

\section{Proof of Theorem \ref{th:geom}}
\label{sec:pf2}

Consider the variety $X=G/P\times \hG/\hP$ endowed with the
diagonal $G$-action: $g'.(gP/P,\,\hat g\hat P/\hP)=(g'gP/P,\,g'\hat
g\hat P/\hP)$.

Let $\lambda$ be a one-parameter subgroup of $G$. 
Consider the centralizer $G^\lambda$ of $\lambda$ in $G$ and
the parabolic subgroup (see \cite{GIT})
$$
P(\lambda)=\left \{
g\in G \::\: \lim_{t\to 0}\lambda(t).g.\lambda(t)^{-1} 
\mbox{  exists in } G \right \}.
$$

Let $C$ be an irreducible component of the fixed point set $X^\lambda$
of $\lambda$ in $X$.
Set
\begin{eqnarray}
  \label{eq:tildeC+}
C^+:=\{x\in X\::\: \lim_{t\to 0}\lambda(t)x {\rm\ belongs\ to\ }C\}.
  \end{eqnarray}
Note that $C^+$ is $P(\lambda)$-stable and  locally closed in $X$.
Consider the  subvariety $Y$ of $G/P(\lambda)\times X$ defined by
$$
Y=\{(gP(\lambda)/P(\lambda),x)\;:\;g^{-1}x\in C^+\}.
$$
The morphism $\pi\,:\,G\times C^+\longto Y,\, (g,x)\longmapsto (gP(\lambda)/P(\lambda),\,gx)$
identifies $Y$ with the quotient of $G\times C^+$ by the action of $P(\lambda)$
given by $p.(g,x)=(gp^{-1},px)$. 
The variety $Y$ is denoted by $G\times_{P(\lambda)} C^+$. Set $[g:x]=\pi(g,x)$ and
consider the $G$-equivariant map
$$
\begin{array}{cccc}
\eta\ :&G\times_{{P(\lambda)}}C^+&\longto& X\\
       &[g:x]&\longmapsto&g.x.
\end{array}
$$ 

Recall from \cite{GITEigen} the notion of well covering pairs.

\begin{defin}
\label{def:wc}
The pair $(C,\lambda)$ is said to be {\it covering} if
$\eta$ is birational.
The pair $(C,\lambda)$ is said to be {\it well covering} if  there exists  a $P(\lambda)$-stable open subset $\Om$ of 
$C^+$ intersecting $C$  such that $\eta$ induces an isomorphism from 
$G\times_{{P(\lambda)}}\Om$ onto an open subset of $X$.
\end{defin}

\begin{proof}[of Theorem \ref{th:geom}]
Consider the closure $\overline{C^+}$ of $C^+$ in $X$.
Since $(C,\lambda)$ is covering the map 
$$
\begin{array}{cccc}
\overline{\eta}\,:&
G\times_{P(\lambda)}\overline{C^+}&\longto &X\\
&[g:x]&\longmapsto&gx
\end{array}
$$
is proper and birational. Hence it induces a $G$-equivariant isomorphism
$$
H^0(X,\Li)\simeq H^0(G\times_{P(\lambda)} \overline{C^+},\overline{\eta}^*(\Li)).
$$
In particular
$$
H^0(X,\Li)^G\simeq H^0(G\times_{P(\lambda)} \overline{C^+},\overline{\eta}^*(\Li))^G.
$$
We embed  $\overline{C^+}$ in $G\times_{P(\lambda)}\overline{C^+}$, by
$x\longmapsto [e:x]$. 
Note that the composition of the immersion of $\overline{C^+}$ 
in $G\times_{P(\lambda)}\overline{C^+}$ with $\overline{\eta}$ is the
inclusion map from   $\overline{C^+}$ to $X$.
 In particular $\overline{\eta}^*(\Li)_{|\overline{C^+}}=\Li _{|\overline{C^+}}$
and the restriction induces the following isomorphism
(see for example \cite[Lemma~4]{GITEigen}):
$$
H^0(G\times_{P(\lambda)} \overline{C^+},\overline{\eta}^*(\Li))^G\simeq H^0(\overline{C^+},\Li _{|\overline{C^+}})^{P(\lambda)}.
$$
Since once more, the composition of the immersion of $\overline{C^+}$ 
in $G\times_{P(\lambda)}\overline{C^+}$ with $\overline{\eta}$ is the immersion
of $\overline{C^+}$ in $X$,  we just proved that the restriction induces
the following isomorphism

\begin{eqnarray}
  \label{eq:CP}
  H^0(X,\Li)^G\simeq H^0(\overline{C^+},\Li _{|\overline{C^+}})^{P(\lambda)}.
\end{eqnarray}

Since $\lambda$ acts trivially on $\Li_{|C}$, 
\cite[Lemma~5]{GITEigen} proves that 
the restriction maop induces the following isomorphism

\begin{eqnarray}
  \label{eq:CL}
H^0(C^+,\Li _{|C^+})^{P(\lambda)}\simeq H^0(C,\Li _{|C})^{G^\lambda}.
\end{eqnarray}

By isomorphisms \eqref{eq:CP} and \eqref{eq:CL}, it remains to prove that the
restriction induces the following isomorphism 
$$
H^0(\overline{C^+},\Li _{|\overline{C^+}})^{P(\lambda)}\simeq H^0(C^+,\Li _{|C^+})^{P(\lambda)};
$$
that is, that any regular $P(\lambda)$-invariant section $\sigma$ of $\Li$ on $C^+$ 
extends to $\overline{C^+}$.

\bigskip
Note that $\lambda$ is also a one-parameter subgroup of $\hG$ and
that $\hP(\lambda)$ is defined. 
Fix a maximal torus $T$ of $G$ containing the image of
$\lambda$ and a maximal torus $\hT$ of $\hG$ containing $T$.
Note that $P$ and $\hP$ have not been fixed up to now; we have only
considered the $G\times \hG$-variety $X$. In other words, we can change 
$P$ and $\hP$ by conjugated subgroups. 
Fix a $T\times\hT$-fixed point $x_0$ in $C$, and 
 denote by $P\times \hP$ its stabilizer in $G\times \hG$.
 Hence $x_0=(P/P,\hP/\hP)$.
% Let us fix Borel subgroups $B$ and $\hB$ of $G$ and $\hG$ such that
% $T\subset B\subset P$ and $\hT\subset \hB\subset \hP$.

It is well known that $C^+=P(\lambda)P/P\times \hP(\lambda)\hP/\hP$. 
In particular
$\overline{C^+}$ is a product of Schubert varieties and is normal. 
Hence it is sufficient to proved that $\sigma$ has no pole.
Since $\sigma$ is regular on $C^+$, it remains  to prove that  
$\sigma$ has no pole along any codimension one irreducible component $D$
 of $\overline{C^+}-C^+$. 
We are going to compute the order of the pole of $\sigma$ along $D$ by
a quite explicit computation in a neighborhood of $D$ in $\overline{C^+}$.

\bigskip
If $\beta$ is a root of $(T,G)$,  $s_\beta$ denotes the
associated reflection in the Weyl group.
The divisor $D$ is the closure of 
$P(\lambda).s_\beta P/P\times \hP(\lambda)\hP/\hP$ 
for some  root $\beta$ or of 
$P(\lambda)P/P\times \hP(\lambda) s_\hbeta\hP/\hP$ 
for some root $\hbeta$.
Consider the first case. The second one works similarly.
 
Set $y =(s_\beta P/P,\hP/\hP)$; it is a point in $D$.
Consider the unipotent radical $U^-$  of the parabolic subgroup of
$G$ containing $T$ and opposite to $P$. Similarly define $\hU^-$.
Consider the groups $U_y=P(\lambda)\cap s_\beta U^-s_\beta $ and
$\hat U_y=\hP(\lambda)\cap \hU^-$.
Let $\delta$ be the $T$-stable line in $G/P$ containing $P/P$ and
$s_\beta P/P$.
Consider the map 
$$
\begin{array}{cccl}
  \theta\,:&U_y\times \hat U_y\times (\delta-\{P/P\})&\longto &X\\
&(u,\hat u,x)&\longmapsto&(ux,\hat u\hP/\hP).
\end{array}
$$
The map $\theta$ is an immersion 
and its image $\Omega$ is open in $\overline{C^+}$.
Since $\Omega$ intersects $D$, it is sufficient to prove that $\sigma$ extends on  $\Omega$.
Equivalently, we are going to prove that $\theta^*(\sigma)$ extends to
a regular section of $\theta^*(\Li)$.

The torus $T$ acts on $U_y\times \hat U_y\times (\delta-\{P/P\})$ by
$t.(u,\hat u,x)=(tut^{-1},t\hat u t^{-1},tx)$. This action makes
$\theta$ equivariant.
The curve $(\delta-\{P/P\})$ is isomorphic to $\CC$. 
The group $U_y$ is unipotent and so isomorphic to its Lie algebra.
It follows that $U_y\times \hat U_y\times (\delta-\{P/P\})$ is isomorphic as a 
$T$-variety to an affine space $V$ with linear action of $T$.

Fix root (for the action of $T\times\hT$)  coordinates $\xi_i$ on the Lie algebra of $U_y\times \hU_y$.
Fix a $T$-equivariant coordinate $\zeta$ on $\delta-\{P/P\}$. 
Then $(\xi_i,\zeta)$ are coordinates on $V$.
 Let $(a_i,a)$ be the opposite of the 
weights of the variables for the action of $\lambda$.
The weights of $T$ corresponding to the part $U_y$ are roots of
$P(\lambda)$ and the weights of $\hT$ corresponding to the part $\hU_y$ are roots of
$\hP(\lambda)$.  The weight of the action of $T$ on $T_{s_\beta
  P/P}\delta$ is a root of $G$ but not of $P(\lambda)$.
 Then we have
\begin{eqnarray}
  \label{eq:sgn1}
  a_i\geq 0\ {\rm and\ }a<0.
\end{eqnarray}

Note that $(\iota\circ\theta)^{-1}(D)$ is the divisor $(\zeta=0)$ on $V$.
 
Consider now, the $\CC^*$-linearized line bundle $\theta^*(\Li)$ on $V$.
It is trivial as a line bundle (the Picard group of $V$ is trivial) and so, 
it is isomorphic to $V\times \CC$ linearized by 
$$
t.(v,\tau)=(\lambda(t)v,t^\mu\tau)\ \ \forall t\in\CC^*, 
$$
for some integer $\mu$.

\bigskip
We first admit that  
\begin{eqnarray}
  \label{eq:sgnmu}
  \mu\leq 0
\end{eqnarray}
 and we end the proof.
The section $\theta^*(\sigma)$ corresponds to a polynomial in the variables
$\xi_i,\zeta$ and $\zeta^{-1}$; that is, a linear combination of  
monomials $m=\prod_i \xi_i^{j_i}.\zeta^j$ for some $j_i\in\ZZ_{\geq 0}$ and
$j\in\ZZ$. 
The opposite of the weight of $m$ for the action of $\CC^*$ is
$
\sum_i j_ia_j+ja.
$
The fact that $\sigma$ is $\CC^*$-invariant  implies that the monomials 
occurring in the expression of $(\iota\circ\theta)^*(\sigma)$ satisfy
$$
\sum_i j_ia_j+ja=\mu.
$$
Hence
$$
j=\frac {-1} a (\sum_i j_ia_i-\mu).
$$
Now, inequalities  \eqref{eq:sgn1} and \eqref{eq:sgnmu} imply that $j\geq 0$.
In particular $(\iota\circ\theta)^*(\sigma)$ extends to a regular function 
on $V$. 
It follows that $\sigma$ has no pole along $D$.

\bigskip
It remains to prove inequality \eqref{eq:sgnmu}.
Consider the restriction of $\Li$ to $\delta$.
Note that $\delta$ is isomorphic to $\PP^1$ and $\Li_{|\delta}$
is isomorphic to $\Orb(d)$ as a line bundle for some integer $d$.
Since $\Li$ is semiample, $d$ is nonnegative.
The group $\CC^*$ acts on $T_{P/P}\delta$ by the weight $-a$ and on
$T_y\delta$ be the weight $a$.
By assumption, the group $\CC^*$ acts trivially on the fiber $\Li_{x_0}$ 
(recall that $x_0$ belongs to $C$). It acts on the fiber $\Li_y$ by the
weight $\mu$.
Now, the theory of $\PP^1$ implies that:
$$
d=\frac{\mu-0}{a}.
$$
But, $d\geq 0$ and $a<0$. It follows that $\mu\leq 0$.  
\end{proof}

 \section{Proof of Theorem \ref{th:mainintro}}

Let $T$, $B$, $\hT$ and $\hB$ be like in the introduction.
For any character $\nu$ of $B$,  $\Li_{\nu}$  denotes the $G$-linearized line
bundle on $G/B$ such that $B$ acts on the fiber in $\Li_\nu$ over
$B/B$ with the weight $-\nu$.
By Borel-Weil's theorem, the line bundle $\Li_\nu$ is generated by its global sections if and
only if $\nu$ is dominant and in this case $H^0(G/B,\Li_\nu)$ is
isomorphic to the dual $V_\nu^*(G)$ of the  simple $G$-module $V_\nu(G)$ with highest weight $\nu$.

Consider the complete flag variety $X=G/B\times \hG/\hB$ of the group
$G\times \hG$.
Let $\nu$ and $\hnu$ be like in Theorem \ref{th:mainintro}.
Let $\Li$ be the exterior product on $X$ of $\Li_\nu$ and $\Li_\hnu$.
By Borel-Weil's theorem (applied to the group $G\times\hG$), we have
$$
V_\nu(G)^*\otimes V_\hnu^*(\hG)=H^0(X,\Li).
$$
In particular $c_{\nu\,\hnu}(G,\hG)$ is the dimension of
$H^0(X,\Li)^G$.

Set $C=G^SB/B\times \hG^S\hw\hB/\hB$. 
By \cite{GITEigen2}, there exists a one-parameter subgroup $\lambda$ of
$S$ such that $(C,\lambda)$ is well covering and $G^S=G^\lambda$.
 Moreover, assumption \eqref{eq:5} implies  that   $\lambda$ acts
trivially on $\Li_{|C}$. Hence Theorem  \ref{th:geom} implies that
$$
H^0(X,\Li)^G\simeq H^0(C,\Li_{|C})^{G^S}.
$$
However $C$ is isomorphic to the complete flag manifold of the group
$G^S\times \hG^S$. 
By condition \eqref{eq:4}, $\Li_{|C}$ is the line bundle
$\Li_\nu\otimes\Li_{\hw\hnu}$. Hence Borel-Weil's theorem implies that
$H^0(C,\Li_{|C})$ is isomorphic to $V_\nu^*(G^S)\otimes
V_{\hw\hnu}^*(\hG^S)$.
In particular $c_{\nu\,\hw\hat\nu}(G^S,\hat G^S)$ is the dimension of 
$H^0(C,\Li_{|C})^{G^S}$. The theorem is proved.

\section{Examples}
\label{sec:exple}

\subsection{Tensor product decomposition}

In this subsection, we consider the case when $\hG=G\times G$ and $G$
is diagonally embedded in $\hG$.
Assume that $\hB=B\times B$ and $\hT=T\times T$.
Then a dominant weight $\hnu$ of $\hT$ is a pair $(\lambda,\mu)$ of
dominant weights of $T$ and $V_\hnu(G\times G)=V_\lambda(G)\otimes V_\mu(G)$.
For short, we  denote by $c_{\lambda\,\mu\,\nu}(G)$ the coefficient
$c_{\nu\,\hnu}(G,\hG)$.
Then
\begin{eqnarray}
  \label{eq:11}
  V_\lambda(G)\otimes V_\mu(G)=\sum_{\nu}c_{\lambda\,\mu\,\nu}(G)\;V_\nu^*(G),
\end{eqnarray}
and $c_{\lambda\,\mu\,\nu}(G)$  is a tensor product multiplicity for $G$.
With the  notations of Theorem \ref{th:mainintro}, we have $\hG^S=G^S\times
G^S$. In particular the coefficient $c_{\nu\,\hw\hat\nu}(G^S,\hat
G^S)$ is a tensor product multiplicity for the Levi subgroup $G^S$ of
$G$. Hence Theorem \ref{th:mainintro} implies to the
main result of \cite{Roth:red}. 

\bigskip
Consider the case when $G=\GL_n(\CC)$, $T$ consists in diagonal
matrices and $B$ in upper triangular matrices.
Then a dominant weight $\lambda$ is a nonincreasing sequence
$(\lambda_1,\,\cdots,\lambda_n)$ of $n$ integers and
$c_{\lambda\,\mu\,\nu}(G)$ is a Littlewood-Richardson coefficient
denoted by $c_{\lambda\,\mu\,\nu}^n$.

Notations are useful  to describe $LR(G,\hG)$.
Let $\Gr(r,n)$ be the Grassmann variety of $r$-dimensional subspaces
of  $\CC^n$.
Let $F_\bullet$: $\{0\}=F_0\subset F_1\subset 
F_2\subset\cdots\subset F_{n}=\CC^n$ be the standard flag   of $\CC^n$.
Let $\Part(r,n)$ denote the set of subsets of $\{1,\cdots,n\}$ with $r$ elements. 
Let $I=\{i_1<\cdots<i_r\}\in\Part(r,n)$. 
The Schubert variety $\Omega_I(F_\bullet)$ in $\Gr(r,n)$  is defined by
$$
\Omega_I(F_\bullet)=\{L\in\Gr(r,n)\,:\,
\dim(L\cap F_{i_j})\geq j {\rm\ for\ }1\leq j\leq r\}.
$$
The Poincar\'e dual of the homology class of $\Omega_I(F_\bullet)$  is denoted by $\sigma_I$.
The classes $\sigma_I$ form a $\ZZ$-basis for the cohomology ring of $\Gr(r,n)$. 
The class associated to $[1;r]$ is the class of the point; it is
denoted by $[{\rm pt}]$.

By \cite{Kly:stable} , \cite{KT:saturation} and finally \cite{Belk:P1},
we have the following statement.

\begin{theo}\label{th:Belk}
  Let $(\lambda,\mu,\nu)$ be a triple of nonincreasing sequences of $n$ integers.
Then $c_{\lambda\,\mu\,\nu}^n\neq 0$ if and only if 
\begin{eqnarray}
\label{eq:traceBelk}
  \sum_i\lambda_i+\sum_j\mu_j+\sum_k\nu_k=0
\end{eqnarray}
 and
\begin{eqnarray}
  \label{eq:ineqIJKBelk}
\sum_{i\in I}\lambda_i+\sum_{j\in J}\mu_j+\sum_{k\in K}\nu_k\leq 0,
\end{eqnarray}
for any $r=1,\cdots,n-1$, for any $(I,J,K)\in \Part(r,n)^3$ such that 
\begin{eqnarray}
  \label{eq:cohomcond}
\sigma_I.\sigma_J.\sigma_K=[{\rm pt}]\in {\rm H}^*(\Gr(r,n),\ZZ).
\end{eqnarray}
\end{theo}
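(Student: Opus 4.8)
The plan is to identify the non-vanishing of $c_{\lambda\,\mu\,\nu}^n$ with membership of the triple $(\lambda,\mu,\nu)$ in the cone $\lr(G,\hG)$ for $G=\GL_n(\CC)$ diagonally embedded in $\hG=G\times G$, and then to read off the two families of conditions from the explicit facet description of that cone. By \eqref{eq:11} and the discussion of this subsection, $c_{\lambda\,\mu\,\nu}^n$ is the dimension of $H^0(X,\Li)^G$, where $X=(G/B)^3$ and $\Li=\Li_\nu\boxtimes\Li_\lambda\boxtimes\Li_\mu$; thus $c_{\lambda\,\mu\,\nu}^n\neq 0$ is equivalent to $(\lambda,\mu,\nu)\in\LR(G,\hG)$, and in particular implies $(\lambda,\mu,\nu)\in\lr(G,\hG)$.

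First I would extract the linear condition \eqref{eq:traceBelk}. The scalar torus $\CC^*\subset\GL_n$ lies in the center of $G$ and acts on the fibre of $\Li$ over any point through the integer $\sum_i\lambda_i+\sum_j\mu_j+\sum_k\nu_k$ (up to sign). Since a nonzero $G$-invariant section is in particular fixed by this central $\CC^*$, its existence forces this integer to vanish. Geometrically this says that the center of $G$ acts trivially, and it describes exactly the linear span of $\lr(G,\hG)$.

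Next come the inequalities \eqref{eq:ineqIJKBelk}. By the Hilbert--Mumford numerical criterion, the semistable locus $X^{\rm ss}(\Li)$ is nonempty, hence $H^0(X,\Li^{\otimes m})^G\neq 0$ for some $m>0$, precisely when no one-parameter subgroup $\tau$ of $G$ destabilizes every point of $X$; the resulting numerical conditions cut out $\lr(G,\hG)$ by half-spaces. By the GIT theory of the eigencone \cite{GITEigen,GITEigen2}, the facets of $\lr(G,\hG)$ are indexed by well covering pairs $(C,\tau)$ with $\tau$ a dominant one-parameter subgroup. For $X=(G/B)^3$ these pairs reduce, through the Bruhat decomposition and the projection to a Grassmannian $\Gr(r,n)$, to triples $(I,J,K)\in\Part(r,n)^3$, and the inequality attached to $(I,J,K)$ is exactly \eqref{eq:ineqIJKBelk}. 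Klyachko \cite{Kly:stable} proved that the inequalities with $\sigma_I\sigma_J\sigma_K\neq 0$ already suffice; the refinement to the irredundant system \eqref{eq:cohomcond}, where the product equals the point class $[{\rm pt}]$, is Belkale's transversality result \cite{Belk:P1} and corresponds precisely to the well covering, genuinely facet-defining pairs. Together with the first step this shows $(\lambda,\mu,\nu)\in\lr(G,\hG)$ if and only if \eqref{eq:traceBelk} and \eqref{eq:ineqIJKBelk} hold.

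The hard part is to upgrade the rational membership $(\lambda,\mu,\nu)\in\lr(G,\hG)$ to the integral non-vanishing $c_{\lambda\,\mu\,\nu}^n\neq 0$, that is, to rule out the usual loss by which only some positive multiple of the weight is known to occur. This is supplied by the saturation theorem of Knutson--Tao \cite{KT:saturation}: for $\GL_n$ the semigroup $\LR(G,\hG)$ is saturated, so it coincides with $\lr(G,\hG)\cap\bigl(X(T)^+\times X(\hT)^+\bigr)$. This is a genuinely combinatorial input (the honeycomb/hive model) rather than a formal GIT consequence, and is the step I expect to be the main obstacle; I would also need Belkale's deformation argument on $\Gr(r,n)$ to guarantee that the point-class inequalities are simultaneously necessary and exhaustive, so that the system \eqref{eq:traceBelk}--\eqref{eq:ineqIJKBelk} is complete.
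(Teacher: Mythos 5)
Your outline is correct and matches the paper exactly: the paper offers no proof of this theorem, only the attribution to Klyachko, Knutson--Tao, and Belkale, and your sketch assembles precisely those three inputs in their standard roles (Hilbert--Mumford/Klyachko for the cone inequalities, saturation to pass from the cone to the semigroup, Belkale for the irredundant list with $\sigma_I.\sigma_J.\sigma_K=[{\rm pt}]$). You are also right to flag saturation as the genuinely non-formal step that cannot be recovered from the GIT framework alone.
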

 
Knutson, Tao and Woodward proved in 
\cite{KTW} that this statement is optimal in the following sense.

\begin{theo}\label{th:KTW}
  In Theorem~\ref{th:Belk}, no inequality can be omitted.
\end{theo}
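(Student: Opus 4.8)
The plan is to show that for each triple $(I,J,K)\in\Part(r,n)^3$ satisfying $\sigma_I.\sigma_J.\sigma_K=[{\rm pt}]$, the corresponding inequality \eqref{eq:ineqIJKBelk} defines a \emph{facet} of the cone $\lr(\GL_n,\GL_n\times\GL_n)$, i.e.\ a face of codimension one. Since this cone is full-dimensional inside the trace hyperplane \eqref{eq:traceBelk}, its dimension is $3n-1$, so a facet has dimension $3n-2$. The linear form $(\lambda,\mu,\nu)\mapsto\sum_{i\in I}\lambda_i+\sum_{j\in J}\mu_j+\sum_{k\in K}\nu_k$ attached to a triple recovers $(I,J,K)$ from the positions of its nonzero coefficients, so distinct triples give pairwise non-proportional forms and hence distinct hyperplanes. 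Consequently, once each listed inequality is shown to be facet-defining, no other listed inequality supports the same facet, and therefore none can be dropped without enlarging the cone; this is exactly the assertion of Theorem~\ref{th:KTW}.

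First I would translate the hypothesis $\sigma_I.\sigma_J.\sigma_K=[{\rm pt}]$ into the geometric setting of Section~\ref{sec:pf2}. The equality means that generic translates of the three Schubert varieties meet in a single reduced point, which is precisely the birationality needed for the pair $(C,\lambda)$ attached to $(I,J,K)$ to be well covering in the sense of Definition~\ref{def:wc}; had the structure constant exceeded one, the map $\eta$ would fail to be birational and the associated face would drop in dimension (these are the redundant Belkale inequalities). Via the dictionary of \cite{GITEigen2}, such a well-covering pair produces a regular face $\Face$ of $\lr(\GL_n,\GL_n\times\GL_n)$ whose span is the hyperplane \eqref{eq:5}, and this hyperplane is exactly the equality case of \eqref{eq:ineqIJKBelk}. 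The corresponding data is a rank-one subtorus $S\subset T$ together with an element $\hw$, and the centralizers are $G^S=\GL_r\times\GL_{n-r}$ and $\hat G^S=G^S\times G^S$.

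Next I would invoke Theorem~\ref{th:mainintro} on the span of $\Face$. It yields, for every dominant $(\nu,\hnu)$ in that hyperplane, the identity $c_{\nu\,\hnu}(\GL_n,\GL_n\times\GL_n)=c_{\nu\,\hw\hnu}(G^S,\hat G^S)$, and the product structure of $G^S$ makes the right-hand side factor as a Littlewood--Richardson coefficient for $\GL_r$ times one for $\GL_{n-r}$. Thus the injective linear map $(\nu,\hnu)\mapsto(\nu,\hw\hnu)$ identifies the lattice points of $\Face$ with the semigroup $\LR(G^S,\hat G^S)$, and hence $\Face$ with the product of the Littlewood--Richardson cones of $\GL_r$ and $\GL_{n-r}$. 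Each factor is full, of dimension $3r-1$ and $3(n-r)-1$ respectively, so
$$
\dim\Face=(3r-1)+(3(n-r)-1)=3n-2=\dim\lr(\GL_n,\GL_n\times\GL_n)-1,
$$
whence $\Face$ is a facet and the inequality is irredundant.

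The hard part will be the final dimension count: Theorem~\ref{th:mainintro} only asserts an equality of multiplicities, and to deduce $\dim\Face=3n-2$ I must upgrade it to a full-dimensional image, i.e.\ obtain the \emph{lower} bound $\dim\Face\geq 3n-2$. The injection above gives only $\dim\Face\leq 3n-2$, so by itself it is not enough. I would supply the lower bound constructively: starting from regular dominant data $(\lambda',\mu',\nu')$ for $\GL_r$ and $(\lambda'',\mu'',\nu'')$ for $\GL_{n-r}$ with nonvanishing coefficients, distribute the first across the coordinates indexed by $I$ and the second across the complementary coordinates, and shift each block along the central (determinant) direction of its factor so that the resulting $n$-tuples are $\GL_n$-dominant while staying on the hyperplane \eqref{eq:5}. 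By the reduction the glued triple has nonzero coefficient, hence lies in $\Face$, and as the small data vary over a full-dimensional family the glued triples sweep out a $(3n-2)$-dimensional subset of $\Face$. Verifying that this gluing is always possible—that enough $\GL_n$-dominant weights on \eqref{eq:5} lift the $G^S$-data—is where the well-covering property and the freedom along the central directions are essential; the two remaining ingredients, regularity of $\Face$ (needed to apply Theorem~\ref{th:mainintro}) and full-dimensionality of the smaller cones, are classical, the former being part of the classification of regular faces in \cite{GITEigen2}.
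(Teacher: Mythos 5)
First, a point of comparison: the paper does not prove Theorem~\ref{th:KTW} at all --- it is quoted from Knutson--Tao--Woodward \cite{KTW}, so there is no internal proof to match. Your strategy (show each inequality with $\sigma_I.\sigma_J.\sigma_K=[{\rm pt}]$ cuts out a facet, and that distinct triples give non-proportional linear forms) is the right shape of argument, and the reduction to a dimension count via Theorem~\ref{th:prod} is sensible. Two small omissions aside --- you should also rule out that such a facet coincides with a dominance wall $\lambda_i\geq\lambda_{i+1}$ modulo the trace hyperplane (easy: the $\mu$-coefficients of the two forms cannot match since $1\leq |J|\leq n-1$), and you conflate ``covering'' with ``well covering'' --- the argument up to the dimension count is fine.

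The genuine gap is exactly where you place it, and your proposed fix does not work. Theorem~\ref{th:prod} identifies $\Face_{IJK}$ with the set of \emph{dominant} triples on the hyperplane whose two restrictions lie in $\lr_r$ and $\lr_{n-r}$; so you must show that the preimage of $\lr_r^{\circ}\times\lr_{n-r}^{\circ}$ meets the open dominant chamber. These are two relatively open convex subsets of the $(3n-2)$-plane, and nonempty open convex sets need not intersect --- this intersection statement \emph{is} the theorem, not a formality. Your gluing by central shifts fails already for $n=4$, $I=\{2,3\}$: with $\lambda'=(10,0)$ and $\lambda''=(1,0)$ dominance of the interleaved sequence requires $\lambda''_1+a''>\lambda'_1+a'$ and $\lambda'_2+a'>\lambda''_2+a''$, i.e.\ $a''-a'>9$ and $a'>a''$ simultaneously, which is impossible. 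So not every pair of small data lifts to a $\GL_n$-dominant weight on the hyperplane \eqref{eq:5}; one must produce a single regular dominant point whose restrictions are interior to both small cones, and this is where the hypothesis $\sigma_I.\sigma_J.\sigma_K=[{\rm pt}]$ (as opposed to $d[{\rm pt}]$) must be used in an essential way beyond Theorem~\ref{th:prod}. Invoking ``the well-covering property'' at this point is not an argument; supplying one is essentially reproving \cite{KTW}.
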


In other words, each inequality \eqref{eq:ineqIJKBelk} corresponds to a
regular face $\Face_{IJK}$  of the cone $\lr(G,\hG)$.
For $I=\{i_1<\cdots<i_r\}\in\Part (r,n)$ and $\lambda$ a sequence of
$n$ integers,  set 
$
\lambda_I=(\lambda_{i_1},\cdots,\lambda_{i_r})\in\ZZ^r.
$
Denote by $I^c\in\Part(n-r,n)$ the complement of $I$ in $\{1,\cdots,n\}$.
It is easy to
check that Theorem \ref{th:mainintro} gives in this case the following
statement.

\begin{theo}\label{th:prod}
  Let $(\lambda,\mu,\nu)$ be a triple of nonincreasing sequences of $n$ integers. Let
  $(I,J,K)\in \Part(r,n)$ such that 
  \begin{eqnarray}
    \label{eq:17}
\sigma_I.\sigma_J.\sigma_K=[{\rm pt}].
  \end{eqnarray}
If 
\begin{eqnarray}
  \label{eq:18}
  \sum_{i\in I}\lambda_i+\sum_{j\in J}\mu_j+\sum_{k\in K}\nu_k=0
\end{eqnarray}
 then
 \begin{eqnarray}
   \label{eq:12}
c^n_{\lambda\,\mu\,\nu}=c^r_{\lambda_I\,\mu_J\,\nu_K}\;.\; 
c^{n-r}_{\lambda_{I^c}\,\mu_{J^c}\,\nu_{K^c}}.
 \end{eqnarray}
\end{theo}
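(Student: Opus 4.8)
The plan is to deduce Theorem~\ref{th:prod} from Theorem~\ref{th:mainintro} by specializing the general setup to $G=\GL_n$ diagonally embedded in $\hG=G\times G$, identifying the combinatorial data $(S,\hw)$ associated to the regular face $\Face_{IJK}$ cut out by \eqref{eq:17}, and then recognizing both branching multiplicities for the Levi subgroups as Littlewood--Richardson coefficients. The main conceptual work is the dictionary translating the cohomological condition $\sigma_I.\sigma_J.\sigma_K=[{\rm pt}]$ into an explicit one-parameter subgroup $\lambda$ (hence a subtorus $S$) and Weyl group element $\hw$, so that condition \eqref{eq:18} becomes exactly condition \eqref{eq:5}.

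First I would recall that for $G=\GL_n$ with diagonal torus, a subset $I=\{i_1<\cdots<i_r\}\in\Part(r,n)$ determines a one-parameter subgroup $\lambda_I$ of $T$ (up to the usual normalization), namely the cocharacter taking the value $1$ on coordinates indexed by $I$ and $0$ elsewhere, whose centralizer is the block-diagonal Levi $L=\GL_r\times\GL_{n-r}$ embedded according to the partition $\{1,\dots,n\}=I\sqcup I^c$. The standard fact linking Schubert calculus on $\Gr(r,n)$ to the eigencone (as used in Theorem~\ref{th:Belk} and Theorem~\ref{th:KTW}) is that a triple $(I,J,K)$ with $\sigma_I.\sigma_J.\sigma_K=[{\rm pt}]$ records precisely a \emph{well covering pair} $(C,\lambda)$ for the flag variety $X=G/B\times(G/B\times G/B)$, where $C$ is a product of the corresponding $T$-fixed Schubert cells and the element $\hw\in\hW=\Sym_n\times\Sym_n$ encodes the two permutations determining $J$ and $K$. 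I would invoke \cite{GITEigen2}, cited in the introduction, to assert that this $(S,\hw)$ is exactly the data attached to the regular face $\Face_{IJK}$, so that the span condition \eqref{eq:5} reads $\nu_{|S}+\hw\hnu_{|S}=0$, which under the diagonal identification $\hnu=(\mu,\nu')$ unwinds to the single linear relation \eqref{eq:18}.

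With the face identified, Theorem~\ref{th:mainintro} gives
$$
c_{\lambda\,\mu\,\nu}(G)=c_{\nu\,\hw\hnu}(G^S,\hG^S).
$$
Here $G^S=L=\GL_r\times\GL_{n-r}$ and, since $\hG=G\times G$, one has $\hG^S=G^S\times G^S=(\GL_r\times\GL_{n-r})\times(\GL_r\times\GL_{n-r})$, as already noted in the tensor-product subsection. The decisive computation is that the branching coefficient for the Levi subgroup \emph{factors} as a product over the two blocks: a tensor-product multiplicity for $\GL_r\times\GL_{n-r}$ is the product of the multiplicity for the $\GL_r$ factor and the multiplicity for the $\GL_{n-r}$ factor. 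Restricting a dominant weight $\lambda$ of $T$ to the $\GL_r$ block picks out exactly the subsequence $\lambda_I=(\lambda_{i_1},\dots,\lambda_{i_r})$, and to the $\GL_{n-r}$ block the complementary subsequence $\lambda_{I^c}$; the twist by $\hw$ reorders $\mu$ and $\nu$ so that they are read off along $J,K$ and their complements. This yields
$$
c_{\nu\,\hw\hnu}(G^S,\hG^S)
=c^r_{\lambda_I\,\mu_J\,\nu_K}\cdot c^{n-r}_{\lambda_{I^c}\,\mu_{J^c}\,\nu_{K^c}},
$$
which is precisely \eqref{eq:12}.

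The step I expect to be the main obstacle is bookkeeping the action of $\hw$ and the restriction maps so that the indices come out correctly \emph{simultaneously} for all three weights $\lambda,\mu,\nu$ and for both Levi blocks; in particular one must check that the Weyl-group element $\hw$ supplied by \cite{GITEigen2} sorts the entries of $\mu$ and $\nu$ into the subsequences indexed by $J$ and $K$ (and $J^c$, $K^c$) in the nonincreasing order required for them to be genuine dominant weights of the smaller general linear groups, while the restriction of $\lambda$ splits into $\lambda_I$ and $\lambda_{I^c}$ without any reordering. This is the content of the phrase \emph{``It is easy to check''} in the statement: once the combinatorial identification of the face with $(S,\hw)$ is in hand, the remaining verification is routine but requires care to match conventions, and the factorization of the Levi multiplicity into a product over blocks is the formal mechanism producing the product on the right-hand side of \eqref{eq:12}.
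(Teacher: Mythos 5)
Your proposal is correct and follows essentially the same route as the paper, which itself only remarks that ``It is easy to check'' that Theorem~\ref{th:mainintro} specializes to this statement once Theorem~\ref{th:KTW} guarantees that the inequality indexed by $(I,J,K)$ cuts out a regular face $\Face_{IJK}$, with $G^S=\GL_r\times\GL_{n-r}$ and the Levi multiplicity factoring over the two blocks. The bookkeeping of $(S,\hw)$ and of the index sets that you flag as the delicate point is precisely what the paper leaves to the reader.
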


Theorem \ref{th:prod} has been proved independently in
\cite{KTT:factorLR} and \cite{DW:comb}.
Note that if equation \eqref{eq:18} does not hold then $c^r_{\lambda_I\,\mu_J\,\nu_K}=0$. 

It is known that Theorem \ref{th:Belk} also holds if condition
\eqref{eq:cohomcond} is replaced by 
\begin{eqnarray}
  \label{eq:38}
  \sigma_I.\sigma_J.\sigma_K=d[{\rm pt}]\in {\rm H}^*(\Gr(r,n),\ZZ),
\end{eqnarray}
for some positive integer $d$.
The following example shows that condition \eqref{eq:17} cannot be
replaced by condition
\eqref{eq:38} in Theorem \ref{th:prod}.

\bigskip\noindent{\bf Example.}
  Here $n=6,\, r=3$ and $I=J=K=\{1,\,3,\,5\}$.
Then $\sigma_I.\sigma_J.\sigma_K=2[{\rm pt}]$ and for any 
$(\lambda,\mu,\nu)$ in $\LR(G,\hG)$, the inequality 
$\sum_{i\in I}\lambda_i+\sum_{j\in J}\mu_j+\sum_{k\in K}\nu_k\leq 0$
holds.
Consider   $\lambda=\mu=\nu=(1\,1\,0\,0\,-1\,-1)$.
Then  $c^n_{\lambda\,\mu\,\nu}=3$. Hence $(\lambda,\mu,\nu)$ belongs
to  $\LR(G,\hG)$.
Moreover $\lambda_I=\mu_J=\nu_K=\lambda_{I^c}=\mu_{J^c}=\nu_{K^c}=
(1\,0\,-1)$
and 
$\sum_{i\in I}\lambda_i+\sum_{j\in J}\mu_j+\sum_{k\in K}\nu_k=0$.
But  $c^r_{\lambda_I\,\mu_J\,\nu_K}= 
c^{n-r}_{\lambda_{I^c}\,\mu_{J^c}\,\nu_{K^c}}=2$ and 
$c^r_{\lambda_I\,\mu_J\,\nu_K}.
c^{n-r}_{\lambda_{I^c}\,\mu_{J^c}\,\nu_{K^c}}=4\neq 3=c^n_{\lambda\,\mu\,\nu}$.

\bigskip
\begin{remark}
  With notation of Section~\ref{sec:pf2}, if $\eta$ is dominant, the map
$$
H^0(X,\Li)^G\longto H^0(C,\Li)^{G^\lambda}
$$
is injective.
When applied to $X=\Fl(n)^3$ this observation showh that if
$\sigma_I.\sigma_J.\sigma_K\neq 0$ then equality~\ref{eq:18} implis
that $c^n_{\lambda\,\mu\,\nu}\leq c^r_{\lambda_I\,\mu_J\,\nu_K}\;.\; 
c^{n-r}_{\lambda_{I^c}\,\mu_{J^c}\,\nu_{K^c}}$ according to the example.
\end{remark}

\bigskip
Note that Knutson and  Purbhoo proved in \cite{KnPu:product} some equalities
\eqref{eq:12} with assumptions different from those of Theorem \ref{th:prod}.
 
\subsection{Kronecker coefficients}

Let $\alpha=(\alpha_1\geq\alpha_2\geq\dots)$ be a partition. 
Denote by $l(\alpha)$ the number of nonzero parts of $\alpha$.
Set $|\alpha|=\sum_i\alpha_i$,  $\alpha$ is called a partition
of $|\alpha|$.
Consider the symmetric group $\Sym_n$ acting on $n$ letters.
The irreducible representations of $\Sym_n$ are parametrized by the
partitions of $n$, let $[\alpha]$ denote the representation
corresponding to $\alpha$.
The Kronecker coefficients $k_{\alpha\,\beta\,\gamma}$, depending on
three partitions $\alpha,\,\beta$, and $\gamma$ of the same integer $n$, are defined by
the identity
\begin{eqnarray}
  \label{eq:6}  [\alpha]\otimes [\beta]=\sum_{\gamma}k_{\alpha\,\beta\,\gamma}[\gamma].
\end{eqnarray}

The following classical result of Murnaghan and
Littlewood  (see \cite{Murna:kron}) shows that Kronecker coefficients
generalize Littlewood-Richardson coefficients.

\begin{coro}
  \begin{enumerate}
  \item If $k_{\alpha\,\beta\,\gamma}\neq 0$ then 
    \begin{eqnarray}
      \label{eq:7}
      (n-\alpha_1)+(n-\beta_1)\geq n-\gamma_1.
    \end{eqnarray}
\item Assume that equality holds in formula \eqref{eq:7} but
  not necessarily that $k_{\alpha\,\beta\,\gamma}\neq 0$.
Define $\bar\alpha=(\alpha_2\geq \alpha_3\cdots)$ and similarly
define $\bar \beta$ and $\bar\gamma$. Then
\begin{eqnarray}
  \label{eq:8}
  k_{\alpha\,\beta\,\gamma}=c_{\bar\alpha\,\bar\beta}^{\bar \gamma},
\end{eqnarray}
where $c_{\bar\alpha\,\bar\beta}^{\bar \gamma}$ is the
Littlewood-Richardson coefficient.
  \end{enumerate}
\end{coro}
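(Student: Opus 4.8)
The plan is to realize the Kronecker coefficient $k_{\alpha\,\beta\,\gamma}$ as a branching multiplicity $c_{\nu\,\hnu}(G,\hG)$ to which Theorem~\ref{th:mainintro} applies, with the Schur--Weyl setup providing the geometric data. First I would recall the classical identification coming from Schur--Weyl duality: via the embedding $\GL_a\times\GL_b\hookrightarrow\GL_{ab}$, the Kronecker coefficient $k_{\alpha\,\beta\,\gamma}$ equals the multiplicity of $V_\alpha(\GL_a)\otimes V_\beta(\GL_b)$ in $V_\gamma(\GL_{ab})$ for $a,b$ large enough (say $a\geq l(\alpha),\ b\geq l(\beta)$, and $ab\geq l(\gamma)$). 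Concretely this puts us in the situation $G=\GL_a\times\GL_b\subset\hG=\GL_{ab}$ with the tensor embedding, and $k_{\alpha\,\beta\,\gamma}=c_{\nu\,\hnu}(G,\hG)$ for the appropriate dominant weights $\nu=(\alpha,\beta)$ and $\hnu=\gamma^*$ (up to the usual dualization conventions in~\eqref{eq:2}).

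The heart of the argument is to produce a \emph{regular face} of $\lr(G,\hG)$ whose span is cut out by exactly the linear condition in~\eqref{eq:7}, so that equality in~\eqref{eq:7} is the statement that $(\nu,\hnu)$ lies in that span, i.e.\ condition~\eqref{eq:5}. To this end I would choose the one-dimensional subtorus $S\subset T$ acting by the character $v\mapsto (n-\alpha_1)+(n-\beta_1)-(n-\gamma_1)$-type weighting that isolates the first rows; the natural candidate is the torus whose associated $\lambda$ contracts everything except the ``first box'' in each tensor factor. Under such an $S$, the centralizers become $G^S=(\GL_1\times\GL_{a-1})\times(\GL_1\times\GL_{b-1})$ and $\hG^S=\GL_1\times\GL_{ab-1}$, and the Weyl group element $\hw$ is forced by~\eqref{eq:4}. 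I would then verify that this $(S,\hw)$ does define a regular face and that its span equation~\eqref{eq:5} reduces precisely to $(n-\alpha_1)+(n-\beta_1)=n-\gamma_1$. Theorem~\ref{th:mainintro} then yields
$$
k_{\alpha\,\beta\,\gamma}=c_{\nu\,\hnu}(G,\hG)=c_{\nu\,\hw\hnu}(G^S,\hG^S),
$$
and the remaining task is to read off the right-hand side.

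The final step identifies $c_{\nu\,\hw\hnu}(G^S,\hG^S)$ with the Littlewood--Richardson coefficient $c_{\bar\alpha\,\bar\beta}^{\bar\gamma}$. Here the $\GL_1$ factors carry the first-row data $\alpha_1,\beta_1,\gamma_1$ and contribute nothing beyond the trace condition, while the residual factors $\GL_{a-1}\times\GL_{b-1}\subset\GL_{ab-1}$ carry the truncated partitions $\bar\alpha,\bar\beta,\bar\gamma$. Invoking again the Schur--Weyl interpretation of the branching coefficient on $G^S\subset\hG^S$ — now essentially an induction/restriction between $\GL_{a-1}\times\GL_{b-1}$ and $\GL_{ab-1}$ in the stable range — this multiplicity becomes exactly a Littlewood--Richardson number, namely $c_{\bar\alpha\,\bar\beta}^{\bar\gamma}$. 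Part~(i) of the corollary, the inequality~\eqref{eq:7}, then follows because $\lr(G,\hG)$ lies on the correct side of the supporting hyperplane of $\Face$, i.e.\ the face inequality holds on all of $\LR(G,\hG)$.

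The main obstacle I anticipate is the bookkeeping in the second step: pinning down the exact torus $S$, the exact $\hw\in\hW$ satisfying~\eqref{eq:4}, and checking the face is genuinely regular and that~\eqref{eq:5} collapses to~\eqref{eq:7}. The combinatorics of which Schubert-type condition on the tensor flag variety encodes ``first-row'' truncation is delicate, and matching the resulting $c_{\nu\,\hw\hnu}(G^S,\hG^S)$ to the \emph{specific} Littlewood--Richardson coefficient $c_{\bar\alpha\,\bar\beta}^{\bar\gamma}$ (rather than some reindexed variant) requires care with the dualization and the conventions in~\eqref{eq:2}. By contrast, once the face is correctly identified, the passage through Theorem~\ref{th:mainintro} is immediate.
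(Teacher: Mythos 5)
There is a genuine gap in the third step, and it is not mere bookkeeping: the Levi pair you identify is wrong, and the reduction you want does not produce a Littlewood--Richardson coefficient in the setting you chose. For the tensor embedding $G=\GL_a\times\GL_b\subset\hG=\GL_{ab}$, the centralizer $\hG^S$ of a subtorus $S\subset T$ is the product of the $\GL$'s of the $S$-weight spaces of $E\otimes F$. For the ``first row'' torus (weight $1$ on hyperplanes $H_E,H_F$, weight $0$ on lines $l_E,l_F$) these weight spaces are $H_E\otimes H_F$, $(H_E\otimes l_F)\oplus(l_E\otimes H_F)$ and $l_E\otimes l_F$, of dimensions $(a-1)(b-1)$, $a+b-2$ and $1$, so $\hG^S\cong\GL_{(a-1)(b-1)}\times\GL_{a+b-2}\times\GL_1$ --- never $\GL_1\times\GL_{ab-1}$ (indeed no subtorus of $G$ has that centralizer: if all products $s_i+t_j$ but one coincide, one quickly derives $s_1=s_2$ and $t_1=t_2$ and a contradiction). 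Consequently $c_{\nu\,\hw\hnu}(G^S,\hG^S)$ is a branching multiplicity for $(\GL_{a-1}\times\GL_1)\times(\GL_{b-1}\times\GL_1)$ inside this three-block Levi; the block $\GL_{(a-1)(b-1)}$ restricted to $\GL_{a-1}\times\GL_{b-1}$ reintroduces Kronecker-type multiplicities, and the reduced coefficient is a convolution of such terms with tensor-product multiplicities, not visibly $c_{\bar\alpha\,\bar\beta}^{\bar\gamma}$. Littlewood--Richardson coefficients arise from the \emph{direct-sum} embedding $\GL_p\times\GL_q\subset\GL_{p+q}$, and nothing in your setup isolates such an embedding.

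The paper's proof is engineered precisely to kill the troublesome block $H_E\otimes H_F$: it does not apply Theorem~\ref{th:mainintro} to complete flag varieties, but applies the geometric Theorem~\ref{th:geom} to $X=\Fl(E)\times\Fl(F)\times\Fl(1,\dots,e+f-1;E\otimes F)$, where the third factor is a \emph{partial} flag variety (legitimate because $l(\gamma)\le e+f-1$, which is why $e,f$ are chosen that way). The fixed component $C_{E\otimes F}$ then consists of flags pinched between $l_E\otimes l_F$ and $(l_E\otimes l_F)\oplus(H_E\otimes l_F)\oplus(l_E\otimes H_F)$, hence is a flag variety of $H_E\oplus H_F$ on which $\GL(H_E)\times\GL(H_F)$ acts by direct sum; this is what makes $H^0(C,\Li_{|C})^{G^\lambda}$ the multiplicity of $S^{\bar\alpha}H_E\otimes S^{\bar\beta}H_F$ in $S^{\bar\gamma}(H_E\oplus H_F)$, i.e.\ exactly $c_{\bar\alpha\,\bar\beta}^{\bar\gamma}$. (For part (i) the paper likewise does not invoke a known face inequality but derives $\mu^{\Li}(x,\lambda)\le 0$ from nonvanishing of invariants via \cite[Lemma 3]{GITEigen}.) To repair your argument you would have to either switch to this partial-flag/well-covering-pair formulation, or else justify why the convolution coming from the full Levi collapses --- the latter is exactly the content you are missing.
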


\begin{proof}
Let us first introduce some notation on the linear group.
Let $V$ be a complex finite dimensional vector space and let  $\GL(V)$
be the corresponding linear group.
If $\alpha$ is a partition with at most dim$(V)$ parts, $S^\alpha V$
denotes the Schur power of $V$ ; it is the irreducible
$\GL(V)$-module of heighest weight $\alpha$. 
Let $\Fl(V)$ denote the variety of complete flags of $V$.
Given integers $a_i$ such that $1\leq a_1<\cdots<a_s\leq \dim (V)-1$,
 $\Fl(a_1,\cdots,a_s;\,V)$ denotes the set of flags
$V_1\subset\cdots\subset V_s\subset V$ such that $\dim(V_i)=a_i$ for
any $i$.

\bigskip
  Let us choose integers $e$ and $f$ such that 
 
\begin{eqnarray}
    \label{eq:9}\left\{
      \begin{array}{l}
    l(\alpha)\leq e,\\
l(\beta)\leq f,\\
l(\gamma)\leq e+f-1.     
      \end{array}
\right .
  \end{eqnarray}
Let $E$ and $F$ be two complex vector spaces of dimension $e$ and $f$.
Consider the group $G=\GL(E)\times\GL(F)$.
The Kronecker coefficient $k_{\alpha\,\beta\,\gamma}$ can be
interpreted in terms of representations of $G$. Namely  (see for
example \cite{Macdo,book:FH}) $k_{\alpha\,\beta\,\gamma}$ is the
multiplicity of $S^\alpha E\otimes S^\beta F$ in 
$S^\gamma(E\otimes F)$.
To interpret this multiplicity geometrically,  consider the variety
$$
X=\Fl(E)\times\Fl(F)\times\Fl(1,\cdots, e+f-1; E\otimes F)
$$
endowed with its natural $G$-action.
Consider the $\GL(E)$-linearized line bundle $\Li^\alpha$ on $\Fl(E)$
such that $H^0(\Fl(E),\Li^\alpha)=S^\alpha E$
(with usual notation, $\Li^\alpha=\Li_{-w_0\alpha}$).
Similarly, fix  $\Li^\beta$ on $\Fl(F)$ such that 
$H^0(\Fl(F),\Li^\beta)=S^\beta F$.
Because of assumption \eqref{eq:9}, there exists a 
$\GL(E\otimes F)$-linearized line bundle $\Li_\gamma$ on 
$\Fl(1,\cdots, e+f-1; E\otimes F)$ such that 
$H^0(\Fl(1,\cdots, e+f-1; E\otimes F),\Li^\gamma)=S^\gamma (E^*\otimes F^*)$. 
Observe that $S^\gamma (E^*\otimes F^*)$ is not a polynomial representation of $\GL(E)\times \GL(F)$. 
The line bundle
$\Li=\Li^\alpha\otimes\Li^\beta\otimes\Li_\gamma$ on $X$ is $G$-linearized.
Then 
\begin{eqnarray}
  \label{eq:10}
  k_{\alpha\,\beta\,\gamma}=\dim(H^0(X,\Li)^G).
\end{eqnarray}
Let $H_E,\,H_F,\,l_E$ and $l_F$ be hyperplanes and lines respectively
in $E$ and $F$ such that $E=H_E\oplus l_E$ and $F=H_F\oplus l_F$.
Let $\lambda$ be the one-parameter subgroup of $G$ acting on $H_E$ and $H_F$
with weight $1$ and on $l_E$ and $l_F$ with weight $0$.
Let $C_E$ be the set of complete flags of $E$ whose the hyperplane  is
$H_E$. Note that $C_E$ is an irreducible component of $\Fl(E)^\lambda$.
Similarly define $C_F$.
Let $C_{E\otimes F}$ be the set of points $V_1\subset \cdots\subset V_{e+f-1}$ in 
$\Fl(1,\cdots, e+f-1; E\otimes F)$ such that $V_1=l_E\otimes l_F$ and 
$V_{e+f-1}=(l_E\otimes l_F)\oplus (H_E\otimes l_F)\oplus(l_E\otimes H_F)$.
Note that $C_{E\otimes F}$ is an irreducible component of
$\Fl(1,\cdots, e+f-1; E\otimes F)^\lambda$ isomorphic to
$\Fl(H_E\oplus H_F)$. Then $C=C_E\times C_F\times 
C_{E\otimes  F}$ is an irreducible component of $X^\lambda$.

Observe that $C^+_{E\otimes F}$ is open in $\Fl(1,\cdots, e+f-1; E\otimes
F)$, $(C_E,\lambda)$ and $(C_F,\lambda)$ are  covering in $\Fl(E)$
and $\Fl(F)$ for the actions of $\GL(E)$ and $\GL(F)$. 
It follows that $(C,\lambda)$ is  covering. 

Let $x$ be a point in $C$. 
Let $\mu^\Li(x,\lambda)$ be the opposite of the weight of the action
of $\lambda$ on the fiber of $\Li$ over $x$.
\cite[Lemma 3]{GITEigen} implies that if $\dim(H^0(X,\Li)^G)>0$ then
$\mu^\Li(x,\lambda)\leq 0$ which is the inequality of the corollary.
Assume that $\mu^\Li(x,\lambda)= 0$, that is that $\lambda$
acts trivially on $\Li_{|C}$. Theorem \ref{th:mainintro}
shows that
$$
\dim(H^0(X,\Li)^G)=\dim(H^0(C,\Li_{|C})^{G^\lambda}).
$$
Moreover $\dim(H^0(C,\Li_{|C})^{G^\lambda})$ is the multiplicity of
the simple $\GL(H_E)\times\GL(H_F)$-module 
$S^{\bar\alpha}H_E\otimes S^{\bar\beta}H_F$ in the 
$\GL(H_E\oplus H_F)$-module
$S^{\bar \gamma}(H_E\oplus H_F)$.
By for example \cite[Chapter I, 5.9]{Macdo}, this multiplicity is
precisely $c_{\bar\alpha\,\bar\beta}^{\bar \gamma}$.
\end{proof}

\bibliographystyle{amsalpha}
\bibliography{branching}

\begin{center}
  -\hspace{1em}$\diamondsuit$\hspace{1em}-
\end{center}

\end{document}